\documentclass[12pt]{amsart}
 \usepackage{amsmath,amssymb,amsthm,mathrsfs}
\usepackage{epsfig,color,mathtools}

\definecolor{pp}{rgb}{.5,0,.8}
\definecolor{pr}{rgb}{.2,0.5,.8}

\makeatother

\theoremstyle{definition}

\def\fnum{equation} 
\newtheorem{Thm}[\fnum]{Theorem}

\newtheorem{Que}[\fnum]{Question}
\newtheorem{Lem}[\fnum]{Lemma}

\newtheorem{Rem}[\fnum]{Remark}

\numberwithin{equation}{section}

\newcommand{\Int}{\operatorname{Int}}

\def\RR{{\mathbb{R}}}

\def\CC{{\mathbb C }}

\newcommand{\e}{{\text {e}}}

\newcommand{\Area}{{\text {Area}}}

\title{Minimal surfaces with rapid area growth}

\author{Tobias Holck Colding}%
\address{MIT, Dept. of Math.\\
77 Massachusetts Avenue, Cambridge, MA 02139-4307.}
\author{Francisco Mart\'in}
\author{William P. Minicozzi II}
\address{Department of Geometry and Topology, University of Granada}

\thanks{The  first and third authors
were partially supported by NSF  DMS Grants   2405393 and 2304684. The second author was partially supported by the grant PID2024-156031NB-I00 and by the  IMAG-Maria de Maeztu grant CEX2020-001105-M, both funded by MICIU/AEI/10.13039/501100011033.}

\email{colding@math.mit.edu, fmartin@ugr.es and minicozz@math.mit.edu}

\begin{document}

\maketitle

\begin{abstract}
We give examples of proper minimal immersions in Euclidean space with very rapid area growth. The first is a proper embedding into $\RR^4$ that yields a stable minimal surface, while the second is a proper  immersion into $\RR^3$.

These results are motivated by \cite{CM1} that shows that proper minimal submanifolds confined in space satisfy strong structural constraints.
\end{abstract}


\section{Introduction}

David Hilbert \cite{H} proved, in 1901, a celebrated impossibility result: there is no complete surface in 
$\RR^3$
  with constant Gaussian curvature $-1$. More generally, Hilbert's argument shows that there is no complete immersed surface in $\RR^3$
  whose Gaussian curvature is bounded above by a negative constant.
In the opposite direction, Nadirashvili \cite{N} constructed examples of complete immersed minimal surfaces contained in the unit ball of $\RR^3$
  with everywhere negative curvature. These should be compared with the earlier examples of Jorge--Xavier \cite{JX}, who constructed nontrivial immersed minimal surfaces contained in a slab in $\RR^3$.
 
Here we are interested in a related but distinct question: the existence of complete proper\footnote{An immersion is proper if its intersection with every compact set is compact.} minimal immersions into $\RR ^n$
  with at least exponential area growth. Any simply connected surface whose curvature is bounded above by a negative constant necessarily has exponential area growth, so one may regard exponential area growth together with minimality as a weaker analogue of the geometric conditions considered by Hilbert.

Neither the examples of Nadirashvili nor those of Jorge--Xavier are proper immersions. In fact, the half-space theorem of Hoffman--Meeks, \cite{HM}, implies that 
any proper minimal immersion contained in a half-space in $\RR^3$ must be planar.

We present two examples of proper minimal immersions into Euclidean space exhibiting exponential area growth. The first is a proper embedding into $\RR^4$ that gives rise to a stable minimal surface, while the second is a proper immersion into $\RR^3$ inspired by \cite{MM04}. We also indicate how both of these constructions can be adapted to produce examples with even faster growth.
  
Our interest in these questions stems from \cite{CM1} that shows that proper minimal submanifolds confined in space satisfy strong structural restrictions. In particular, by \cite{CM1}, any proper minimal immersion whose height grows sublinearly must have Euclidean volume growth. More generally, \cite{CM1} shows that a minimal submanifold of any dimension that is contained in a slab satisfies a volume doubling property. If the slab condition holds on all sufficiently large scales, then the doubling estimate can be iterated to obtain a polynomial upper bound for the volume growth.  On the other hand, a submanifold with exponential volume growth  does not satisfy a volume doubling property and therefore is not confined to slabs across multiple scales.  

The examples here are complete and have at least exponential volume growth and, thus, arbitrarily large volume doubling.  Compact examples with boundary in $\RR^3$ that have
arbitrarily large volume doubling were constructed in \cite{CM2,HW,MM04,MM05}.

\section{Stable minimal surfaces in $\RR^4$ with rapid area growth}

In this section, we present two examples of embedded stable minimal surfaces in $\mathbb{R}^4$ with very rapid area growth. Both are realized as graphs of holomorphic functions. The first exhibits exponential area growth, while the second has even faster Gaussian area growth.

\subsection{Example with exponential area growth}

\begin{Thm}   \label{t:exA}
Let $\Sigma = \{(z,f(z)) : z \in \mathbb{C}\}$  
be the graph in $\mathbb{C}^2$ of the holomorphic function
$f(z)=\sin(\e^z)$.  
Then $\Sigma$ is a smooth stable minimal surface in
$\mathbb{R}^4=\mathbb{C}^2$ 
and there exist constants $c>0$ and $r_0>0$ such that for all $r\ge r_0$,
\begin{align}
\Area\, (B_r\cap \Sigma)\ge \e^{c\,r}\,  .
\end{align}
\end{Thm}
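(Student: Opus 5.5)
We split the argument into three parts: minimality/stability, embeddedness and properness, and the area lower bound.

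\emph{Minimality and stability.} $\Sigma$ is the image of the holomorphic map $z\mapsto (z,f(z))$, so it is a complex curve in $\CC^2$. Complex submanifolds of a Kähler manifold are calibrated by (powers of) the Kähler form, hence area minimizing in their homology class on compact pieces. Thus $\Sigma$ is minimal and stable; in fact every compact subdomain minimizes area among surfaces with the same boundary. Since $\Sigma$ is a graph over $\CC$ of a smooth function, it is smoothly and properly embedded.

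\emph{Reduction of the area bound.} By the Wirtinger identity, the area of a graph of a holomorphic $f$ is
\begin{align}
\Area\,(\Omega) = \int_{\Omega} \left(1+|f'(z)|^2\right)\, dx\,dy \ge \int_\Omega |f'|^2\, dx\,dy ,
\end{align}
where $\Omega\subset\CC$ is the projection of the piece of $\Sigma$ considered. Note that $\int_\Omega |f'|^2$ is the area, with multiplicity, of $f(\Omega)$. Here $f'(z)=\e^z\cos(\e^z)$. The idea is to exhibit, inside $B_r\cap\Sigma$, a region where $\e^z$ covers many times a set on which both $|\sin|$ and $|\cos|$ are controlled.

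\emph{Choosing the region.} Write $z=x+iy$ and $w=\e^z=\e^x\e^{iy}$. We keep $|y|$ small and $|\sin w|$ bounded. The natural choice is points with $y$ near multiples of $\pi$ (up to a tiny angle), so that $w$ is nearly real and $|\sin w|\le 2$. Concretely, take $\Omega_r=\{ x\in[0, r/2],\ |y|\le \e^{-x}\}$. Then $|\Im w|\le 1$, so $|\sin w|\le \cosh 1$. For $r$ large, every point of the graph over $\Omega_r$ has norm at most $\sqrt{(r/2)^2+1+\cosh^2 1}<r$, so the graph over $\Omega_r$ lies in $B_r$.

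Under $z\mapsto w$, the set $\Omega_r$ maps injectively onto the region $\{|w|\in[1,\e^{r/2}],\ |\arg w|\le |w|^{-1}\}$. This region contains roughly the strip $\{1\le\Re w\le \e^{r/2}-1,\ |\Im w|\le 1/2\}$. The area of the graph is at least the area of that strip counted with $|\cos w|^2$ weight, i.e.
\begin{align}
\int_{\Omega_r}|f'|^2 = \int_{w(\Omega_r)} |\cos w|^2\, du\,dv .
\end{align}
On the strip $|\Im w|\le 1/2$ we have $|\cos w|^2=\cos^2 u+\sinh^2 v\ge \cos^2 u$, whose average over each period is $1/2$. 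Integrating gives a lower bound of the form $c\,\e^{r/2}$, and hence $\Area(B_r\cap\Sigma)\ge \e^{r/3}$ for $r\ge r_0$.

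The main obstacle is purely the bookkeeping in the last step: checking that the wedge $|\arg w|\le|w|^{-1}$ really contains a strip of fixed width (it does, since $|w|\cdot|w|^{-1}=1$), and that the change of variables $z\mapsto \e^z$ is injective on $\Omega_r$ (it is, since $|y|<\pi$). The rest is immediate.
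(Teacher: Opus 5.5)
Your proof is correct, and it reaches the bound by a different route from the paper's. The stability part is the same Wirtinger calibration argument.

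For the area bound, the paper works with discrete packets. It centers disks $D_n$ of radius $\frac{1}{16\pi n}$ at $z_n=\log(n\pi)$, where $f(z_n)=0$ and $|f'(z_n)|=n\pi$. It checks by hand that the disks are disjoint, and proves the pointwise bounds $|f|<2$ and $|f'|\ge \pi n/4$ on $D_n$. So each disk contributes area at least $\pi/16^3$, and there are about $e^{r-2}/\pi$ such disks with $\log(n\pi)+2\le r$.

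You instead take the single thin region $\{0\le x\le r/2,\ |y|\le e^{-x}\}$ and use the conformal change of variables $w=e^z$. This turns $\int|f'|^2$ exactly into $\int|\cos w|^2$ over a region containing the strip $[1,e^{r/2}-1]\times[-\tfrac12,\tfrac12]$.

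The two proofs see the same picture: up to small distortion, the paper's $D_n$ are the preimages of the disks of radius $1/16$ about $n\pi$ inside your $w$-strip. Your version replaces the disjointness computation with injectivity of $\exp$ on $|y|<\pi$, and the pointwise derivative estimates with the Jacobian identity. That is cleaner, and it shows the area is just the area of the graph of $\sin$ over a long strip. The paper's packet formulation mirrors the area-packet counting used in the $\RR^3$ construction, and it is what the paper adapts ``with obvious modifications'' to $\sin(e^{z^2})$.

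Two small remarks. First, the strip containment you defer to ``bookkeeping'' is a one-line check: for $u\ge 1$ and $|v|\le\tfrac12$ one has $|\arg w|\le |v|/u$ and $|v|\,|w|\le \tfrac12\sqrt{u^2+\tfrac14}<u$, so $|\arg w|<1/|w|$. Second, your cutoff $x\le r/2$ is only cautious. Taking $x\le r-1$ still keeps the graph inside $B_r$ for large $r$, and gives growth of order $e^{r}$, the same rate as the paper.
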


\begin{proof}
Since $\Sigma$ is the graph of a holomorphic function, it is a complex curve in $\mathbb{C}^2$ and therefore area-minimizing by \cite{W}.  In particular, it is automatically a stable minimal surface.

Using the Cauchy--Riemann equations, we obtain
\begin{align}
\Area\, (B_r\cap \Sigma)
=
\int_{\Omega_r}
\left(
1+\left|f'(z)\right|^2
\right)\,dA\,  ,
\end{align}
where 
$\Omega_r
=
\{z\in\CC:\ |z|^2+|f(z)|^2\le r^2\}$, $f'(z)$ is the complex derivative, 
and $dA$ denotes Lebesgue measure on the $z$-plane.  
For $n=1,2,3,\dots$,  define $z_n=\log(n\,\pi)$ and observe that 
\begin{align}
|f(z+z_n)|
&= \left|\sin\!\left(\pi \,n \,\e^{z}\right)\right| = \left|\sin\!\left(\pi\, n\, (\e^{z}-1)\right)\right|\,  ,\\
|f'(z+z_n)|&= \left|\pi\, n\, \e^z \cos\!\left(\pi\, n \,\e^z\right)\right| = \left|\pi\, n\, \e^z \cos\!\left(\pi\, n (\e^z-1)\right)\right| .
\end{align}
Since $|\e^z-1|\leq |z|\,\e^{|z|}$  for all $z$, we have that as long as $|z|\leq \frac{1}{16\, \pi \,n}$ then 
\begin{align}
	|\e^z-1|\leq   \frac{ \e^{ \frac{1}{16\, \pi \, n}}}{16\, \pi \,n}  < \frac{\e}{16\, \pi \,n} < \frac{1}{4\, \pi \, n}  \, .  \label{e:bdw}
\end{align}
For each $n$, let $D_n$ be the disk
$D_n=
\left\{ z+z_n:\
|z|<\frac{1}{16\, \pi \,n}
\right\}$.    We will need that $D_{n_1}\cap\ D_{n_2}=\emptyset$ for $n_1\ne n_2$.  We will show that  $D_n\cap D_{n+1}=\emptyset$; the general case is similar.  Suppose therefore that $w_n\in D_n$ and $w_{n+1}\in D_{n+1}$.  Observe that 
\begin{align}
|z_{n+1}-z_n|=\log \left(1+\frac{1}{n}\right)\geq \frac{1}{2\,n}\,  .
\end{align}
Therefore, by the triangle inequality 
\begin{align}
|w_{n+1}-w_n|\geq |z_{n+1}-z_n|-|w_{n+1}-z_{n+1}|-|w_n-z_n|\geq \frac{1}{2\,n}-\frac{2}{16\,\pi\,n}>\frac{1}{4\,n}\,  .
\end{align}
This shows that $D_{n+1}\cap D_n=\emptyset$.  

Using that $|\sin w| \leq \e^{|w|}$ for any $w \in \CC$, we see that
\begin{align}
	|f (z+z_n) | =  \left|\sin\!\left(\pi\, n\, (\e^z-1)\right)\right| \leq \e^{ \frac{1}{4}} < 2  
	\, .
\end{align}
Using that $\cos' (w) = - \sin (w)$ and $|\cos (n \pi)| = 1$,  we also have on $D_n$ that
\begin{align}
	|f'|\geq \pi \, n \, \e^{ - \frac{1}{4}} \, \left| 1 - \frac{1}{4} \, \e^{ \frac{1}{4}} \right|  \geq \frac{\pi \, n}{4}  \, . 
\end{align}
 It follows that if $r\geq \log (n\,\pi)+2$, then $D_k\subset \Omega_r$ for $k=1,\cdots, n$ and 
\begin{align}
\Area\, (B_r\cap\Sigma) &\geq \sum_{k=1}^n\int_{D_k}(1+|f'(z)|^2)\,dA\geq \sum_{k=1}^n\int_{D_k} \frac{k^2\,\pi^2}{16} 
=    \frac{n\, \pi}{16^3} \, .
\end{align}
The claim easily follows from this.
\end{proof}

It is interesting to note that the area growth for intrinsic balls is also at least exponential.  To see this, observe that the disks $D_n$ in the proof are centered on the real axis, the length distortion is bounded along  the real axis, and the 
 images of the disks   have uniformly bounded diameter.

\subsection{Example with Gaussian area growth}

Let $\Sigma = \{(z,f(z)) : z \in \mathbb{C}\}$  
be the graph in $\mathbb{C}^2$ of the holomorphic function
$f(z)=\sin(\e^{z^2})$.  

\begin{Thm}
The graph $\Sigma$ is a smooth stable minimal surface in
$\mathbb{R}^4=\mathbb{C}^2$ 
and there exist constants $c>0$ and $r_0>0$ such that for all $r\ge r_0$,
\begin{align}
\Area\, (B_r\cap \Sigma)\ge \e^{c\,r^2}\,  .
\end{align}
\end{Thm}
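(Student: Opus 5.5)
We follow the proof of Theorem \ref{t:exA}. Since $f(z)=\sin(\e^{z^2})$ is entire, $\Sigma$ is a complex curve in $\CC^2$, hence area-minimizing by \cite{W} and in particular stable. As before,
\begin{align}
\Area\,(B_r\cap\Sigma)=\int_{\Omega_r}\left(1+|f'(z)|^2\right)dA, \qquad \Omega_r=\{z:\ |z|^2+|f(z)|^2\le r^2\}.
\end{align}
So it suffices to find many disjoint disks in $\Omega_r$ on which $|f|$ is bounded and $|f'|$ is large.

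We place the disks on the positive real axis. Let $x_n=\sqrt{\log(n\pi)}$, so that $\e^{x_n^2}=n\pi$. For $|z|$ small, $\e^{(x_n+z)^2}=n\pi\,\e^{2x_nz+z^2}$, and we write $w=2x_nz+z^2$. Let $D_n$ be the disk of radius $\rho_n=\frac{1}{C\,n\,x_n}$ about $x_n$, with $C$ a large absolute constant. On $D_n$ we have $|w|\lesssim 1/(Cn)$, so $|\e^w-1|\le \frac{1}{4\pi n}$ once $C$ is large, exactly as in \eqref{e:bdw}.

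The two estimates from Theorem \ref{t:exA} then go through unchanged:
\begin{itemize}
\item $|f|=|\sin(n\pi(\e^w-1))|<2$ on $D_n$.
\item Since $f'(z)=2z\,\e^{z^2}\cos(\e^{z^2})$, we get $|f'|\ge c\,x_n\,n$ on $D_n$, using $|\cos(n\pi)|=1$ and the same perturbation bound on $\cos$.
\end{itemize}

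For disjointness, the mean value theorem gives
\begin{align}
x_{n+1}-x_n\approx \frac{\log(1+1/n)}{2x_n}\ge \frac{1}{4nx_n}.
\end{align}
This exceeds $\rho_n+\rho_{n+1}$ for $C$ large, so consecutive disks are disjoint, and non-consecutive ones are handled similarly by monotonicity.

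Each disk then contributes at least
\begin{align}
\pi\rho_n^2\,c^2x_n^2n^2=\pi c^2/C^2,
\end{align}
a fixed positive constant, the same phenomenon as in Theorem \ref{t:exA}. If $r\ge \sqrt{\log(n\pi)}+3$, then $D_k\subset\Omega_r$ for all $k\le n$, since $|z|\le x_k+1$ and $|f|<2$ there. Hence
\begin{align}
\Area\,(B_r\cap\Sigma)\ge c'\,n.
\end{align}
Choosing $n\approx \e^{(r-3)^2}/\pi$ gives $\Area\,(B_r\cap\Sigma)\ge \e^{c\,r^2}$ for all $r\ge r_0$.

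The main obstacle is making the constants uniform in $n$. Because $x_n\to\infty$, the disk radii must shrink like $1/(nx_n)$ rather than $1/n$. We need to check that the quadratic term $z^2$ in $w$ is negligible; it is, since $|z|^2\le\rho_n^2\ll 1/n$. We also need the estimates to hold starting from some $n_0$ with $x_{n_0}\ge 1$; the finitely many small $n$ can simply be discarded, which only changes $r_0$.
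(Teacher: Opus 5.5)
Your proposal is correct and is essentially the paper's proof: the paper uses the same centers $z_n=\sqrt{\log(n\pi)}$ and disks of radius $\delta/(n\sqrt{\log n})$, then defers to the argument of Theorem~\ref{t:exA} ``with obvious modifications.'' Those modifications are exactly the estimates you carry out; the only slip is in the disjointness step, where the mean value theorem gives $x_{n+1}-x_n\ge \frac{1}{4n\,x_{n+1}}$ rather than $\frac{1}{4n\,x_n}$, which is harmless since $x_{n+1}\le 2x_n$ and so only the constant $C$ changes.
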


\begin{proof}
It follows as in the proof of Theorem \ref{t:exA} that  $\Sigma$ is a smooth stable minimal surface.

For $n=2,3,\dots$, define points
$$z_n=\sqrt{\log(n\,\pi)}$$ and disks
$D_n=
\left\{ z+z_n:\
|z|<\frac{\delta}{n \,\sqrt{\log n}}
\right\}$ for a fixed but sufficiently small $\delta>0$.    
Observe that 
\begin{align}
|f(z+z_n)|
&= \left|\sin\!\left(\pi \,n \,\e^{z\,(z+2\,z_n}\right)\right| = \left|\sin\!\left(\pi\, n\, (\e^{z\,(z+2\,z_n)}-1)\right)\right|\,  ,\\
|f'(z+z_n)|&= \left|2\,\pi\, n\, (z+z_n)\e^{z\,(z+2\,z_n)} \cos\!\left(\pi\, n \,\e^{z\,(z+2\,z_n}\right)\right|\notag\\
&= \left|2\,\pi\, n\,(z+z_n)\, \e^{z\,(z+2\,z_n)} \cos\!\left(\pi\, n (\e^{z\,(z+2\,z_n)}-1)\right)\right| .
\end{align}
The rest of the proof proceeds as the proof of Theorem \ref{t:exA} with obvious modifications.  
\end{proof}

 \vskip1mm
 While any rapidly growing entire function might seem like a  candidate for producing graphs with rapid area growth, the relevant condition is more delicate. 
A useful contrasting example is
$$
f(z)=\e^z .
$$
This is a rapidly growing entire function with
$
\max_{|z|\le R}|\e^z|=\e^R.
$
However, we will see that the graph $\Gamma_{\e^z}$ of $\e^z$ has quadratic area growth.

Indeed, writing $z=x+iy$, one has $f'(z)=f(z)=\e^z$, and hence
$ 
|f(z)|=|f'(z)|=\e^x.
$ 
The domain $\Omega_R 
=
\{(x,y): x^2+y^2+\e^{2x}\le R^2\}
$ is contained in 
$$
|y|\le R {\text{ and }}
-R\le x\le \frac{1}{2} \, \log R.
$$
Therefore, for $R\ge 2$,
\begin{align}
\operatorname{Area}(B_R\cap \Gamma_{\e^z})
&=
\int_{\Omega_R}(1+\e^{2x})\,dx\,dy        
 \le
\int_{-R}^{\frac{1}{2} \,  \log R}\int_{-R}^{R}(1+\e^{2x})\,dy\,dx    \notag    \\
&=
2R\int_{-R}^{\frac{1}{2} \, \log R}(1+\e^{2x})\,dx       \le
C \, R^2 \,  .
\label{eq:ez-polynomial-area}
\end{align}
 Hence, rapid growth alone of $f(z)$ is not enough to guarantee rapid area growth of its graph.  This is because when $f$ is large, the graph leaves the extrinsic ball.  
 
 The rapid area growth in the examples comes from having enough regions where $f$ is bounded, but its derivative is large. 
  Functions modeled on $\sin(\e^z)$   are   well-suited to this purpose.  Replacing $\e^z$ and $\e^{z^2}$  inside the sine by even  faster-growing functions should yield correspondingly rapid lower bounds on the area growth, so long as there are enough regions where the function is bounded and the derivative is large.


\section{Proper minimal surfaces in $\mathbb R^3$ with exponential area growth}
\label{sec:proper-R3-exponential-growth}

In this section we prove the existence of a complete proper minimal immersion
in $\mathbb R^3$ whose extrinsic area growth is at least exponential.  

\subsection{Overview of the construction}

Before giving the details, we briefly describe the three ideas used in the
construction.  The argument is a standard ``push the boundary outward and take a
limit'' construction, but with one additional feature: at each stage we also
insert a controlled amount of area.

The first ingredient is the properness lemma of Mart\'{\i}n--Morales
\cite[Lemma~2]{MM05}.  In informal terms, this lemma says the following.  Suppose
one has a minimal disk whose boundary lies near the boundary of a convex body
$E\subset\mathbb R^3$.  If $E'$ is a slightly larger convex body, then one can
deform the disk so that the new boundary lies near the boundary of $E'$, while
the new annular part of the disk remains outside most of $E$.  Repeating this
with an exhaustion
\[
        B_{r_1}\Subset B_{r_2}\Subset B_{r_3}\Subset\cdots
\]
forces the limiting surface to leave every compact subset of $\mathbb R^3$.
This is the mechanism that gives properness.

The technical tool behind the Mart\'{\i}n--Morales lemma is a L\'opez--Ros deformation
combined with a labyrinth.  The L\'opez--Ros deformation is a classical way of
changing the Weierstrass data of a minimal surface while keeping the surface
minimal; see \cite{LopezRos1991}.  A labyrinth is a carefully chosen collection
of thin compact obstacles in the parameter domain.  The deformation makes the
intrinsic metric very large on these obstacles.  Consequently, any path trying
to cross the corresponding annular region must become long.  This idea goes
back to the constructions of Jorge--Xavier \cite{JX} and was
developed further in many later constructions of complete bounded or proper
minimal surfaces; see for instance \cite{N,MM04,MM05}.

In the present paper, however, we use the L\'opez--Ros labyrinth only for the
radial control supplied by \cite[Lemma~2]{MM05}.  We do not take our area packet
from the labyrinth itself.  This distinction is important.  A L\'opez--Ros
deformation can make the intrinsic metric very large on a labyrinth piece, but
it can also move the image of that piece in space.  Thus such a piece is not, by
itself, a safe source of area inside a prescribed ball.

The second ingredient is the Riemann--Hilbert method for conformal minimal
immersions.  Alarc\'on--Drinovec Drnov\v{s}ek--Forstneri\v{c}--L\'opez proved an
approximate Riemann--Hilbert theorem for conformal minimal immersions
\cite[Theorem~3.6]{ADDFL}; see also \cite{AlarconForstneric2015}.
We use the consequence stated in \cite[Lemma~4.1]{ADDFL}: a conformal minimal
immersion of a compact bordered Riemann surface can be approximated uniformly
while making the intrinsic distance from a fixed interior point to the boundary
arbitrarily large.  Since the approximation is uniform in $\mathbb R^3$, if the image of the original compact surface lies a positive distance inside a
ball, then the image of the perturbed surface still lies inside that ball.

Finally, large intrinsic distance across an annulus forces large area.  This is
an elementary extremal-length observation: if every curve crossing a conformal
annulus has large length for the induced metric, then the area of that annulus
for the same metric must be large.  Thus the Riemann--Hilbert perturbation gives
a compact set $L$ with
\[
        \int_L dA\ge a_0
\]
while keeping its image inside the prescribed ball.

The induction therefore has two separate roles.  The Mart\'{\i}n--Morales step pushes
the surface outward and gives properness.  The Riemann--Hilbert step, performed
afterward and with $C^0$ control, creates a definite area packet without losing
the radial control.  By choosing the radii $r_n$ to grow like $\log n$, the ball
$B_R$ contains exponentially many such packets, and this gives the lower bound
\[
        A_X(R)\ge Ce^{cR}.
\]

\subsection{Some notation.} We shall use the following notation.  If $A$ and $B$ are subsets of a Riemann
surface, then
\[
        A\Subset B
\]
means that the closure of $A$ is compact and contained in $B$.  A polygon $P$
will always mean a polygonal disk in the complex plane
\[
        \mathbb C\simeq \mathbb R^2.
\]
For $\varepsilon>0$, we write $P^\varepsilon$ for the inner parallel polygon
obtained by moving the sides of $P$ a distance $\varepsilon$ toward the interior;
in particular,
\[
        \operatorname{Int}P^\varepsilon\Subset \operatorname{Int}P,
\]
where $\operatorname{Int}P$ denotes the interior of the region bounded by the
polygon $P$.

Similarly, if $E\subset\mathbb R^3$ is a regular convex body, then $E_{-a}$
denotes the inner parallel body at distance $a$ from $\partial E$.

For a conformal minimal immersion
\[
        X:\Omega\longrightarrow \mathbb R^3,
\]
we write
\[
        A_X(R)
        =
        \int_{X^{-1}(B_R)}dA_X
\]
for the parametrized area of the portion of the surface contained in the
Euclidean ball $B_R$.  Thus area is counted with multiplicity.

\subsection{The Mart\'{\i}n--Morales convex-body step} \label{subsec:23}

We recall the part of \cite[Lemma~2]{MM05} that we shall use.  Let
$E\Subset E'$ be regular bounded convex domains in $\mathbb R^3$, both
containing the origin.  Let $P\subset\mathbb C$ be a polygonal disk and let
\[
        X:\operatorname{Int}P\longrightarrow \mathbb R^3
\]
be a conformal minimal immersion with $X(0)=0$.  Let
$\varepsilon,a,b>0$.  Assume that
\[
        X\bigl(
        \operatorname{Int}P\setminus\operatorname{Int}P^\varepsilon
        \bigr)
        \subset
        E\setminus E_{-a},
\]
and that
\[
        \sqrt{
        \left(a+\frac1{\kappa_2(\partial E)}\right)^2+a^2
        }
        -
        \frac1{\kappa_2(\partial E)}
        +
        \varepsilon
        <
        \frac{\operatorname{dist}(\partial E,\partial E')}{2}.
\]
Here $\kappa_2(\partial E)$ is the maximum of the largest principal curvature of
$\partial E$, in the notation of \cite{MM05}.

Then \cite[Lemma~2]{MM05} gives a polygon $Q$ and a conformal minimal immersion
\[
        Y:\operatorname{Int}Q\longrightarrow \mathbb R^3,
        \qquad
        Y(0)=0,
\]
with the following properties.

\begin{enumerate}
\item[(MM1)]
\[
        \overline{\operatorname{Int}P^\varepsilon}
        \subset
        \operatorname{Int}Q
        \subset
        \overline{\operatorname{Int}Q}
        \subset
        \operatorname{Int}P.
\]

\item[(MM2)] If $\sigma>0$ is defined by
\[
        \sqrt{
        \left(a+\frac1{\kappa_2(\partial E)}\right)^2
        +
        (2\sigma+a)^2
        }
        -
        \frac1{\kappa_2(\partial E)}
        +
        \varepsilon
        =
        \frac{\operatorname{dist}(\partial E,\partial E')}{2},
\]
then
\[
        \operatorname{dist}_{Y}
        \bigl(z,P^\varepsilon\bigr)
        >
        \sigma
        \qquad\text{for every } z\in Q.
\]

\item[(MM3)]
\[
        Y(Q)\subset E'\setminus E'_{-b}.
\]

\item[(MM4)]
\[
        Y\bigl(
        \operatorname{Int}Q\setminus\operatorname{Int}P^\varepsilon
        \bigr)
        \subset
        \mathbb R^3\setminus E_{-2(a+b)}.
\]

\item[(MM5)]
\[
        \|Y-X\|<\varepsilon
        \qquad\text{on } \operatorname{Int}P^\varepsilon.
\]
\end{enumerate}
\begin{Rem} \label{re:ed}In the application below we shall use the harmless strengthened form in which
the approximation size in (MM5) is prescribed independently of the width
\(\varepsilon\) of the inner parallel polygon.  This follows directly from the
proof of \cite[Lemma~2]{MM05}: after \(\varepsilon\), \(a\), and \(b\) are fixed,
the L\'opez--Ros parameter may be chosen so that the approximation on
\(\operatorname{Int}P^\varepsilon\) is smaller than any prescribed number.
Thus, in (MM5), the right-hand side may be replaced by an arbitrary
\(\delta>0\). \end{Rem}

We shall use (MM1), (MM3), (MM4), and (MM5).  The distance estimate (MM2) is not
needed to prove completeness here, because the final immersion will be proper in
$\mathbb R^3$, and proper immersions into the complete ambient space
$\mathbb R^3$ are complete.

Since $E'$ is convex, (MM3) implies by the convex hull property that
\[
        Y(\operatorname{Int}Q)\subset E'.
\]
Indeed, every supporting halfspace of $E'$ contains $Y(Q)$, and the coordinate
functions of $Y$ are harmonic.

\subsection{Extremal length of an annulus}

We recall the elementary extremal-length fact used below; see, for instance,
\cite[Chapter~4]{Ahlfors}.  Let
\[
        A(r,R)=\{z\in\mathbb C: r<|z|<R\},
        \qquad
        0<r<R,
\]
and let $\Gamma$ be the family of curves in $A(r,R)$ joining the two boundary
components.  With the convention
\begin{equation} \label{def:extremal}
        \operatorname{Ext}(\Gamma)
        =
        \sup_{\rho}
        \frac{
        \left(
        \inf_{\gamma\in\Gamma}\int_\gamma \rho\,|dz|
        \right)^2
        }{
        \int_{A(r,R)}\rho^2\,dA
        },
\end{equation}
where the supremum is over all nonnegative Borel functions $\rho$ with
$0<\int_{A(r,R)}\rho^2\,dA<\infty$, 
one has
\[
        \operatorname{Ext}(\Gamma)
        =
        \frac{2\pi}{\log(R/r)}.
\]
Equivalently, if
\[
        \operatorname{mod} A(r,R)
        =
        \frac{1}{2\pi}\log\frac{R}{r}
\]
denotes the conformal modulus of the annulus, with our convention, then
\[
        \operatorname{Ext}(\Gamma)
        =
        \frac{1}{\operatorname{mod} A(r,R)}.
\]

More generally, if $\mathcal A$ is any conformal annulus and $\Gamma$ is the
family of curves joining its two boundary components, then
$\operatorname{Ext}(\Gamma)$ is a positive finite number depending only on the
conformal type of $\mathcal A$.  In particular, if a conformal metric
\[
        ds^2=\lambda^2|dz|^2
\]
on $\mathcal A$ has the property that every curve in $\Gamma$ has length at
least $\Lambda$, then the definition of extremal length gives
\[
        \operatorname{Ext}(\Gamma)
        \ge
        \frac{\Lambda^2}{\int_{\mathcal A}\lambda^2\,dA}.
\]
Therefore
\[
        \int_{\mathcal A}\lambda^2\,dA
        \ge
        \frac{\Lambda^2}{\operatorname{Ext}(\Gamma)}.
\]
Thus, on a fixed conformal annulus, forcing all crossing curves to have large
length forces the area of the annulus to be large.  This is the only property
of extremal length used in the sequel.

\subsection{A controlled area-inflation lemma}

The Riemann--Hilbert input that we use  is \cite[Lemma~4.1]{ADDFL}.  This lemma is proved
using the Riemann--Hilbert theorem for conformal minimal immersions
\cite[Theorem~3.6]{ADDFL}. We denote by
\[
        \mathring M=M\setminus\partial M
\]
the interior of the compact bordered Riemann surface $M$.

 In the form needed here, it says that if $M$ is a
compact bordered Riemann surface, $G:M\to\mathbb R^n$ is a conformal minimal
immersion, $p_0\in \mathring M$, and $\Lambda>0$, then $G$ can be approximated
arbitrarily closely in the $C^0(M)$ topology by conformal minimal immersions
$\widehat G$ satisfying
\[
        \operatorname{dist}_{\widehat G}(p_0,\partial M)>\Lambda.
\]
Moreover, since conformal minimal immersions are harmonic, the approximation is
in the $C^r$ topology on compact subsets of $\mathring M$.

\begin{Lem}[Controlled area inflation]
\label{lem:controlled-area-inflation}
Let $M$ be a compact bordered Riemann surface and let
\[
        X:M\longrightarrow \mathbb R^3
\]
be a conformal minimal immersion of class $C^1(M)$.  Let
$K\Subset \mathring M$ be a smoothly bounded compact disk such that
\[
        \mathcal A=\overline{M\setminus K}
\]
is an annulus.  Let
$U\subset\mathbb R^3$ be an open set such that
\[
        X(M)\Subset U.
\]
Then, for every $A_0>0$ and every $\eta>0$ satisfying
\[
        0<\eta<
        \operatorname{dist}\bigl(X(M),\mathbb R^3\setminus U\bigr),
\]
there exist a conformal minimal immersion
\[
        \widetilde X:M\longrightarrow \mathbb R^3
\]
and a compact set
\[
        L\Subset M\setminus K
\]
such that
\[
        \|\widetilde X-X\|_{C^0(M)}<\eta,
\]
\[
        \widetilde X(L)\subset U,
\]
and
\[
        \int_L dA_{\widetilde X}\ge A_0.
\]
Furthermore, the approximation may be chosen $C^1$-close to $X$ on any fixed
compact subset of $\mathring M$.
\end{Lem}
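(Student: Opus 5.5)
The plan is to combine the Riemann--Hilbert consequence \cite[Lemma~4.1]{ADDFL}, as recalled just before the statement, with the extremal-length inequality for annuli. The large-intrinsic-distance property will be converted into an area lower bound on a fixed conformal annulus sitting in $M\setminus K$.

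\textbf{Step 1: fix a conformal annulus.} Choose a point $p_0\in\Int K$. Since $\mathcal A=\overline{M\setminus K}$ is an annulus, fix a smoothly bounded compact annulus $L\Subset \mathring M\setminus K$ that separates $K$ from $\partial M$. For instance, take a collar of $\partial K$ pushed slightly into $M\setminus K$, so that one boundary component of $L$ is parallel to $\partial K$ and the other is parallel to $\partial M$. Let $\Gamma_L$ be the family of curves in $L$ joining its two boundary components. Then $\operatorname{Ext}(\Gamma_L)\in(0,\infty)$ depends only on the conformal type of $L$, which is fixed once and for all and does not depend on the immersion.

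\textbf{Step 2: apply the Riemann--Hilbert lemma.} Set
\[
\Lambda=\sqrt{A_0\,\operatorname{Ext}(\Gamma_L)}+1 .
\]
We want every curve in $\Gamma_L$ to have $\widetilde X$-length at least $\Lambda$. Large intrinsic distance from $p_0$ to $\partial M$ alone does not give this, because the long part of a path could lie in $K$ or in the outer collar rather than in $L$. To localize, I would apply \cite[Lemma~4.1]{ADDFL} to the compact bordered surface $M'=L\cup K$, a disk with $p_0$ in its interior. This gives an immersion $\widehat G$ on $M'$ with $\operatorname{dist}_{\widehat G}(p_0,\partial M')$ as large as we like, and $C^1$-close to $X$ on $K$ because $K\Subset\mathring{M'}$. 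Every path from $p_0$ to $\partial M'$ first crosses $K$ and then crosses $L$. The portion in $K$ has $\widehat G$-length at most $\operatorname{diam}_X(K)+1$ by the $C^1$-closeness. So if
\[
\operatorname{dist}_{\widehat G}(p_0,\partial M')>\Lambda+\operatorname{diam}_X(K)+1,
\]
then every crossing curve of $L$ has length at least $\Lambda$.

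Extending back to all of $M$ is the main obstacle. I would avoid it by instead applying the Riemann--Hilbert theorem \cite[Theorem~3.6]{ADDFL} directly on $M$, with the deformation supported near $L$; its proof uses arcs in the boundary, which can be replaced by a curve family inside $L$. Alternatively, I would use a Mergelyan/Runge-type approximation for conformal minimal immersions on $M$, approximating $\widehat G$ on $M'$ and $X$ on the outer collar, after first making the two agree near the outer curve. If neither works cleanly, I would take the more robust route: apply \cite[Lemma~4.1]{ADDFL} on $M$ itself and define $L$ afterwards, using $C^1$ control on a slightly larger compact $K'\Supset K$ together with a fixed outer collar $N$ of $\partial M$ on which the $C^1$ control is also arranged. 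In that version, $L=\overline{M\setminus(K'\cup N)}$ must carry the length.

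\textbf{Step 3: conclude.} With all crossing curves of $L$ of $\widetilde X$-length at least $\Lambda$, the inequality from the extremal-length subsection, applied to the conformal metric $\lambda^2|dz|^2$ of $\widetilde X$, gives
\[
\int_L dA_{\widetilde X}\ge \frac{\Lambda^2}{\operatorname{Ext}(\Gamma_L)}>A_0 .
\]
Choose the approximation with $\|\widetilde X-X\|_{C^0(M)}<\eta$. Since $\eta<\operatorname{dist}(X(M),\mathbb R^3\setminus U)$, we get $\widetilde X(M)\subset U$, and in particular $\widetilde X(L)\subset U$. The $C^1$-closeness on prescribed compact subsets of $\mathring M$ is inherited from the harmonicity remark recalled before the statement.
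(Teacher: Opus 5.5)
There is a genuine gap, and it comes from your Step~1. You fix $L$ in advance as an annulus $L\Subset\mathring M\setminus K$ that is separated from $\partial M$ by an outer collar. You then need every crossing curve of $L$ to be long, and, as you yourself note, $\operatorname{dist}_{\widetilde X}(p_0,\partial M)>\Lambda$ does not give this. None of your three remedies closes the gap.
\begin{itemize}
\item \cite[Theorem~3.6]{ADDFL} is a statement about arcs of $\partial M$: boundary points are pushed onto the boundaries of attached disks. There is no interior version with a curve family inside $L$ that you can cite, and you give no argument for one.
\item The Mergelyan route needs a conformal minimal immersion equal to $\widehat G$ on $M'=K\cup L$ and to $X$ on the outer collar. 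These sets meet along $\partial M'$, where $\widehat G\ne X$, so ``first making the two agree'' is exactly the gluing problem the lemma is meant to solve. Approximating $\widehat G$ on $M'$ alone loses the required bound $\|\widetilde X-X\|_{C^0(M)}<\eta$ on $M\setminus M'$.
\item The last route needs $C^1$ control on a collar $N$ of $\partial M$. But \cite[Lemma~4.1]{ADDFL} only gives $C^1$ control on compact subsets of $\mathring M$. In its proof the extra length is produced in a thin neighborhood of $\partial M$, i.e.\ inside $N$, so $\overline{M\setminus(K'\cup N)}$ need not carry any of it.
\end{itemize}

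The missing observation is that the lemma only asks for \emph{some} compact $L\Subset M\setminus K$, not a separating annulus. So run the extremal-length argument on the whole annulus $\mathcal A=\overline{M\setminus K}$, whose outer boundary is $\partial M$ itself. Its conformal type is fixed, so $\operatorname{Ext}(\Gamma)\in(0,\infty)$ is determined before $\widetilde X$ is chosen.

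Take $\widetilde X$ from \cite[Lemma~4.1]{ADDFL} on $M$, $C^1$-close to $X$ on $K$, so that every point of $\partial K$ can be joined to $p_0$ inside $K$ by a path of $\widetilde X$-length at most $D$. Preceding any curve of $\mathcal A$ from $\partial K$ to $\partial M$ by such a path gives a path from $p_0$ to $\partial M$. Hence that curve has $\widetilde X$-length greater than $\Lambda-D$.

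With $\Lambda>D+\sqrt{2A_0\operatorname{Ext}(\Gamma)}$ this yields $\int_{\mathcal A}dA_{\widetilde X}>2A_0$. Inner regularity of the finite measure $dA_{\widetilde X}$ then gives a compact $L\subset M\setminus K$ with $\int_L dA_{\widetilde X}\ge A_0$. One may even take $L\subset\mathring M\setminus K$, since $\partial K\cup\partial M$ is a null set. Your Step~3 then finishes the proof unchanged; this is the paper's argument.
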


\begin{proof}
Choose a point
\[
        p_0\in \mathring K.
\]

Let $\Gamma$ be the family of curves in the annulus
\[
        \mathcal A=\overline{M\setminus K}
\]
joining $\partial K$ to $\partial M$.  Its extremal length $\operatorname{Ext}(\Gamma)$
is a positive finite number depending only on the conformal annulus
$\mathcal A$. 

Choose $D>0$ such that every point of $\partial K$ can be joined to $p_0$ by a
curve in $K$ of $X$-length at most $D/2$.  Since the approximation supplied by
\cite[Lemma~4.1]{ADDFL} is $C^1$ on compact subsets of $\mathring M$, after
choosing it sufficiently close on $K$ the same property holds for
$\widetilde X$ with $D$ in place of $D/2$.

Choose
\[
        \Lambda
        >
        D+\sqrt{2A_0\,\operatorname{Ext}(\Gamma)}.
\]
By \cite[Lemma~4.1]{ADDFL}, there exists a conformal minimal immersion
\[
        \widetilde X:M\longrightarrow \mathbb R^3
\]
such that
\[
        \|\widetilde X-X\|_{C^0(M)}<\eta
\]
and
\[
        \operatorname{dist}_{\widetilde X}(p_0,\partial M)>\Lambda.
\]
We also choose the approximation $C^1$-close to $X$ on $K$.

We claim that every curve in $\mathcal A$ joining $\partial K$ to $\partial M$ has
$\widetilde X$-length at least $\Lambda-D$.  Indeed, let
$\gamma\subset\mathcal A$ be such a curve, with initial point
$q\in\partial K$ and final point in $\partial M$.  Join $p_0$ to $q$ by a curve
$\alpha\subset K$ of $\widetilde X$-length at most $D$.  Then
$\alpha*\gamma$ joins $p_0$ to $\partial M$, and hence
\[
        \operatorname{length}_{\widetilde X}(\alpha)
        +
        \operatorname{length}_{\widetilde X}(\gamma)
        >
        \Lambda.
\]
Therefore
\[
        \operatorname{length}_{\widetilde X}(\gamma)
        >
        \Lambda-D.
\]

Since $\widetilde X$ is conformal, the induced metric on $\mathcal A$ has the
form
\[
        ds_{\widetilde X}^2
        =
        \lambda_{\widetilde X}^2 |dz|^2.
\]
By the definition of extremal length and taking
$\rho=\lambda_{\widetilde X}$, we get
\[
        \operatorname{Ext}(\Gamma)
        \ge
        \frac{
        (\Lambda-D)^2
        }{
        \int_{\mathcal A}\lambda_{\widetilde X}^2\,dA
        }.
\]
Hence
\[
        \int_{\mathcal A} dA_{\widetilde X}
        =
        \int_{\mathcal A}\lambda_{\widetilde X}^2\,dA
        \ge
        \frac{(\Lambda-D)^2}{\operatorname{Ext}(\Gamma)}
        >
        2A_0.
\]
By inner regularity of the area measure, there exists a compact set
\[
        L\Subset M\setminus K
\]
such that
\[
        \int_L dA_{\widetilde X}\ge A_0.
\]

Finally, the $C^0$ estimate and the choice of $\eta$ imply
\[
        \widetilde X(M)\subset U.
\]
In particular,
\[
        \widetilde X(L)\subset U.
\]
This proves the lemma.
\end{proof}

\begin{Rem}
The preceding lemma does not claim that \cite[Lemma~4.1]{ADDFL} directly
produces a two-dimensional area packet.  The ADDFL lemma produces large
intrinsic boundary distance while preserving $C^0$ control.  The area estimate
then follows from extremal length on the annulus $\overline{M\setminus K}$.
\end{Rem}

\subsection{An area-enriched Mart\'{\i}n--Morales step}

We now combine \cite[Lemma~2]{MM05} with
Lemma~\ref{lem:controlled-area-inflation}.

\begin{Lem}[Area-enriched convex-body step]
\label{lem:area-enriched-step}
Let $E\Subset E'$ be regular bounded convex domains in $\mathbb R^3$, both
containing the origin.  Let $P\subset\mathbb C$ be a polygonal disk and let
\[
        X:\operatorname{Int}P\longrightarrow \mathbb R^3
\]
be a conformal minimal immersion with $X(0)=0$.  Let $K_0\Subset\operatorname{Int}P$
be compact, and let $\eta_0>0$ and $A_0>0$ be given.

Choose $a,b>0$ and then choose $\varepsilon>0$ so small that
\[
        K_0\Subset \operatorname{Int}P^\varepsilon,
\]
\[
        X\bigl(
        \operatorname{Int}P\setminus\operatorname{Int}P^\varepsilon
        \bigr)
        \subset
        E\setminus E_{-a},
\]
and
\[
        \sqrt{
        \left(a+\frac1{\kappa_2(\partial E)}\right)^2+a^2
        }
        -
        \frac1{\kappa_2(\partial E)}
        +
        \varepsilon
        <
        \frac{\operatorname{dist}(\partial E,\partial E')}{2}.
\]
Write
\[
        D=\operatorname{Int}P^\varepsilon.
\]
Then there exist a polygon $Q$, a conformal minimal immersion
\[
        Y:\operatorname{Int}Q\longrightarrow \mathbb R^3,
        \qquad
        Y(0)=0,
\]
and a compact set
\[
        L\Subset \operatorname{Int}Q\setminus \overline D
\]
such that
\[
       \overline D
        \subset
        \operatorname{Int}Q
        \subset
        \overline{\operatorname{Int}Q}
        \subset
        \operatorname{Int}P,
\]

\[
        Y(Q)\subset E'\setminus E'_{-b},
\]

\[
        Y(\operatorname{Int}Q)\subset E',
\]

\[
        Y\bigl(
        \operatorname{Int}Q\setminus D
        \bigr)
        \subset
        \mathbb R^3\setminus E_{-3(a+b)},
\]
\[
        \|Y-X\|_{C^0(D)}<\eta_0,
\]
\[
        \|Y-X\|_{C^1(K_0)}<\eta_0,
\]
and
\[
        \int_L dA_Y\ge A_0.
\]
\end{Lem}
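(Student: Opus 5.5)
The plan is to apply \cite[Lemma~2]{MM05} first, in the strengthened form of Remark~\ref{re:ed}, and then perturb the result with Lemma~\ref{lem:controlled-area-inflation} on a slightly smaller compact bordered disk.

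\textbf{Step 1: the Mart\'{\i}n--Morales step.} The hypotheses of Section~\ref{subsec:23} hold by the choice of $a,b,\varepsilon$. Apply \cite[Lemma~2]{MM05} with approximation size $\delta$ in (MM5), where $\delta<\eta_0/2$ will be fixed later. This gives a polygon $Q_1$ and an immersion $Y_1:\operatorname{Int}Q_1\to\mathbb R^3$ with $Y_1(0)=0$ satisfying (MM1), (MM3), (MM4), and $\|Y_1-X\|<\delta$ on $D$.

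\textbf{Step 1b: upgrading to $C^1$ control on $K_0$.} Since $K_0\Subset D$ and $Y_1-X$ is harmonic on $D$, interior gradient estimates for harmonic maps give
\[
\|Y_1-X\|_{C^1(K_0)}\le C(K_0,D)\,\delta .
\]
So choosing $\delta$ small makes this less than $\eta_0/2$.

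\textbf{Step 2: the compact bordered disk.} Take a slightly smaller polygon $Q\Subset \operatorname{Int}Q_1$ with $\overline D\subset\operatorname{Int}Q$, close enough to $Q_1$ that, by continuity of $Y_1$ up to $Q_1$ and (MM3), we have $Y_1(Q)\subset E'\setminus \overline{E'_{-b/2}}$. The outer parallel bodies form a one-parameter family, so the width $b$ can be absorbed: if needed, rerun Step~1 with $b/2$ in place of $b$. We also need $Y_1(\overline{\operatorname{Int}Q}\setminus D)$ to lie at positive distance from $E_{-3(a+b)}$. This holds because (MM4) puts it outside $E_{-2(a+b)}$, and the parallel bodies are separated by distance $a+b$.

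\textbf{Step 3: area inflation.} Apply Lemma~\ref{lem:controlled-area-inflation} with $M=\overline{\operatorname{Int}Q}$ (a closed disk), $K=\overline{D'}$ for a smoothly bounded disk $D'$ with $\overline D\subset D'\Subset \operatorname{Int}Q$, and $U=\mathbb R^3$. The $C^0$ parameter is
\[
\eta<\min\{\eta_0/2,\ b/2,\ (a+b)/2\}.
\]
We ask for $C^1$-closeness on $K_0$ below $\eta_0/2$, and take area bound $A_0$. The output is $\widetilde Y$ together with a compact set $L\Subset M\setminus K\subset \operatorname{Int}Q\setminus\overline D$ satisfying $\int_L dA_{\widetilde Y}\ge A_0$.

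Then set $Y=\widetilde Y-\widetilde Y(0)$. The translation is smaller than $\eta$ in size, so it moves everything by at most $2\eta$. Choose $\eta$ so that $2\eta$ still fits all the margins, and note that translations do not change area or the $C^1$ seminorm.

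\textbf{Step 4: checking the conclusions.}
\begin{itemize}
\item Boundary condition: $Y(Q)\subset E'\setminus E'_{-b}$ follows from the $b/2$ margin.
\item Interior condition: $Y(\operatorname{Int}Q)\subset E'$ follows from the convex hull property, exactly as in Section~\ref{subsec:23}.
\item Annular region: the condition on $Y(\operatorname{Int}Q\setminus D)$ comes from the $(a+b)$ margin in Step~2.
\item Approximation: the $C^0$ and $C^1$ bounds follow from the triangle inequality, adding the errors of Steps~1--3.
\end{itemize}

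\textbf{Main obstacle.} The delicate point is Step~2, keeping the boundary condition of (MM3) after a $C^0$ perturbation. The perturbation could move boundary points into $E'_{-b}$ or outside $E'$, so we must shrink $b$ in the application of \cite[Lemma~2]{MM05} and take $\eta$ smaller than that margin.

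There is a second issue: Lemma~\ref{lem:controlled-area-inflation} needs $X$ to be $C^1$ up to the boundary of a compact bordered surface. Restricting to the smaller polygon $Q\Subset\operatorname{Int}Q_1$ handles this. Polygon corners are harmless, since we can instead take a smooth disk between $Q$ and $Q_1$ as $M$ and then restrict back to $Q$.
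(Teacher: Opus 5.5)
Your proposal is correct and follows essentially the same route as the paper. Both arguments apply \cite[Lemma~2]{MM05} with a shrunken collar parameter (the paper uses $b/4$) and small approximation size, upgrade to $C^1$ control on $K_0$ by interior harmonic estimates, pass to a slightly smaller polygon whose image lies in the open shell near $\partial E'$, apply the area-inflation lemma with $C^0$ size below all margins, and translate to restore $Y(0)=0$.

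One caution on your corner remark. If $M$ is a smooth disk strictly larger than $\overline{\operatorname{Int}Q}$, the packet $L$ may lie outside $\operatorname{Int}Q$ and be lost when you restrict. So the final polygon should be chosen after $L$, inside a collar of $\partial M$ that $Y_1$ already maps into the shell with margin.
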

\begin{proof}
Choose a compact domain $K_1$ with
\[
        K_0\Subset K_1\Subset D.
\]
Choose $\delta>0$ small enough that $\delta<\eta_0/3$ and such that
$C^0$-control by $\delta$ on $K_1$ gives $C^1$-control by $\eta_0/3$ on
$K_0$ through the standard interior estimates for harmonic functions.  We first
apply \cite[Lemma~2]{MM05}, in the strengthened form described in
Remark~\ref{re:ed}, to the data
\[
        E\Subset E',
        \qquad
        X:\operatorname{Int}P\to\mathbb R^3,
\]
with the parameters $\varepsilon,a,b/4$ and approximation size $\delta$.  This
gives a polygon $Q_0$ and a conformal minimal immersion
\[
        Y_0:\operatorname{Int}Q_0\longrightarrow \mathbb R^3,
        \qquad
        Y_0(0)=0,
\]
satisfying (MM1)--(MM5).   In particular, by (MM1) one has
\[
        \overline D
        \subset
        \operatorname{Int}Q_0
        \subset
        \overline{\operatorname{Int}Q_0}
        \subset
        \operatorname{Int}P.
\]
By (MM3) and the convex hull property,
\[
        Y_0(\operatorname{Int}Q_0)\subset E'.
\]
By (MM4),
\[
        Y_0\bigl(
        \operatorname{Int}Q_0\setminus D
        \bigr)
        \subset
        \mathbb R^3\setminus E_{-2(a+b/4)}.
\]
By (MM5), in this strengthened form, we have
\[
        \|Y_0-X\|_{C^0(D)}<\delta<\frac{\eta_0}{3}.
\]
By the choice of $\delta$ and the fact that $K_0\Subset K_1\Subset D$, we also
have
\[
        \|Y_0-X\|_{C^1(K_0)}<\frac{\eta_0}{3}.
\]

Choose a polygon $Q_1$ with
\[
        \overline D
        \subset
        \operatorname{Int}Q_1
        \subset
        \overline{\operatorname{Int}Q_1}
        \subset
        \operatorname{Int}Q_0.
\]
Recall that we applied \cite[Lemma~2]{MM05} with the boundary-collar parameter
\(b/4\).  Hence
\[
        Y_0(Q_0)\subset E'\setminus E'_{-b/4}.
\]
We choose \(Q_1\) sufficiently close to \(Q_0\) so that
\[
        Y_0(Q_1)\subset E'\setminus E'_{-b/2}
\]
and
\[
        Y_0\bigl(\overline{\operatorname{Int}Q_1}\bigr)\Subset E'.
\]
Set
\[
        M=\overline{\operatorname{Int}Q_1}.
\]
Thus $M$ is a compact bordered Riemann surface.  Choose a smoothly bounded
compact disk $K$ such that
\[
        K_1\cup \overline D
        \Subset
        K
        \Subset
        \operatorname{Int}Q_1,
\]
and such that
\[
        \mathcal A=\overline{M\setminus K}
\]
is an annulus.  Choose $p_0\in \operatorname{Int}K$.

Since
\[
        Y_0(M)\Subset E',
\]
there is a positive number
\[
        d_0=
        \operatorname{dist}\bigl(Y_0(M),\mathbb R^3\setminus E'\bigr)>0.
\]
We shall apply Lemma~\ref{lem:controlled-area-inflation} to
\[
        Y_0|_M:M\longrightarrow \mathbb R^3,
        \qquad
        K\Subset \mathring M,
        \qquad
        U=E',
\]
with the prescribed number $A_0$ and with approximation size chosen below.

Choose $\eta>0$ smaller than $d_0$, $\eta_0$, $b/2$, and the radial margin
needed to preserve the conclusion of (MM4).  In particular, choose
$\eta<a+b$.  Applying Lemma~\ref{lem:controlled-area-inflation} with
approximation size $\eta/2$ gives a conformal minimal immersion
\[
        \widehat Y:M\longrightarrow \mathbb R^3
\]
and a compact set
\[
        L\Subset M\setminus K
        \quad \mbox{and thus} \quad L \Subset 
        \operatorname{Int}Q_1\setminus \overline D
\]
such that
\[
        \|\widehat Y-Y_0\|_{C^0(M)}<\frac{\eta}{2},
\]
\[
        \widehat Y(L)\subset E',
\]
and
\[
        \int_L dA_{\widehat Y}\ge A_0.
\]
The approximation is also $C^1$-small on $K$, and hence on $K_0$.

Now set
\[
        \widetilde Y=\widehat Y-\widehat Y(0).
\]
Since $Y_0(0)=0$, the translation vector has norm less than $\eta/2$.  Hence
\[
        \|\widetilde Y-Y_0\|_{C^0(M)}<\eta.
\]
The translation does not change the induced metric, the area of $L$, or
minimality, and it gives
\[
        \widetilde Y(0)=0.
\]
Moreover, since the final $C^0$-change from $Y_0$ is less than $\eta<d_0$, we
still have
\[
        \widetilde Y(M)\subset E'.
\]
In particular, $\widetilde Y(L)\subset E'$ and
\[
        \int_L dA_{\widetilde Y}\ge A_0.
\]

Choosing $\eta$ sufficiently small gives
\[
        \|\widetilde Y-X\|_{C^0(D)}<\eta_0
\]
and
\[
        \|\widetilde Y-X\|_{C^1(K_0)}<\eta_0.
\]

The same $C^0$-smallness preserves the radial exclusion coming from (MM4), with
slightly weaker constants.  More precisely, choosing $\eta<a+b$ gives
\[
        \widetilde Y\bigl(
        \operatorname{Int}Q_1\setminus D
        \bigr)
        \subset
        \mathbb R^3\setminus E_{-3(a+b)}.
\]

Since the final $C^0$-change from $Y_0$ is less than $\eta<b/2$, the distance
to $\partial E'$ can increase by at most $b/2$.  Thus
\[
        \widetilde Y(Q_1)\subset E'\setminus E'_{-b}.
\]

Set
\[
        Q=Q_1,
        \qquad
        Y=\widetilde Y|_{\operatorname{Int}Q}.
\]

Then $Y$, $Q$, and $L$ satisfy the desired conclusions.
\end{proof}

We construct inductively polygons $P_n$ and conformal minimal immersions
\[
        X_n:\operatorname{Int}P_n\longrightarrow \mathbb R^3,
        \qquad
        X_n(0)=0.
\]
We also choose polygonal domains
\[
        D_n=\operatorname{Int}P_n^{\varepsilon_n}
\]
and compact protected domains $\Omega_n$ such that
\[
        \Omega_n\Subset D_n\Subset \Omega_{n+1}.
\]
The limiting Riemann surface will be
\[
        \Omega=\bigcup_{n=1}^{\infty}\Omega_n
        =
        \bigcup_{n=1}^{\infty}D_n.
\]

The induction is arranged so that the following properties hold.

\begin{enumerate}
\item[(I1)]
\[
        \Omega_n\Subset D_n\Subset \Omega_{n+1}
        \Subset D_{n+1}
        \Subset \operatorname{Int}P_{n+1}.
\]

\item[(I2)]
\[
        X_n\bigl(
        \operatorname{Int}P_n\setminus D_n
        \bigr)
        \subset
        B_{r_n}\setminus B_{r_n-a_n}.
\]

\item[(I3)]
\[
        X_{n+1}
        \bigl(
        \operatorname{Int}P_{n+1}\setminus D_n
        \bigr)
        \subset
        \mathbb R^3\setminus B_{r_n-\tau_n},
\]
where
\[
        \tau_n=3(a_n+b_n).
\]

\item[(I4a)]
\[
        \|X_{n+1}-X_n\|_{C^0(D_n)}<\eta_n.
\]

\item[(I4b)]
\[
        \|X_{n+1}-X_n\|_{C^1(\Omega_n)}<\eta_n.
\]

\item[(I5)] There exists a compact set
$
        L_n\Subset \Omega_{n+1}\setminus \overline D_n
$
\[
        \int_{L_n}dA_{X_{n+1}}\ge 2a_0,
\]
where $a_0>0$ is fixed independently of $n$.
\end{enumerate}
\begin{Rem} \label{re:rho}By the choice of the parameters, the sequence
\[
        \rho_n
        :=
        r_n-\tau_n-\sum_{k\ge n}\eta_k
\]
is eventually increasing and tends to infinity. \end{Rem}

\vskip1mm

\subsection{Choice of radii and parameters}

Let
\[
        r_n=\log(n+n_0),
        \qquad
        E_n=B_{r_n},
\]
where \(n_0\) is chosen sufficiently large.  Put
\[
        \mu_n=r_{n+1}-r_n.
\]
Then
\[
        \mu_n
        =
        \log\left(1+\frac1{n+n_0}\right)
        \sim
        \frac1{n+n_0}.
\]
After increasing \(n_0\) if necessary, we may assume
\[
        \frac12\mu_n\le \mu_{n+1}\le \mu_n
        \qquad\text{for all }n.
\]

Choose any positive sequence \(a_n\) satisfying
\[
        a_n<\frac{\mu_n}{100}.
\]
Next choose \(b_n>0\) such that
\[
        b_n<\frac{a_{n+1}}{20}.
\]
Define the upper bounds \(\bar\eta_n > 0\) by
\[
        \bar\eta_n =  2^{-n}\frac{\mu_n}{100} 
         \, ,
\]
so using that $\mu_n$ is decreasing gives that 
\[
	\sum_{k \geq n} \bar\eta_k \leq 	\sum_{k \geq n} \, \left( 2^{-k} \, \frac{\mu_n}{100} \right) \leq \frac{\mu_n}{100} \, .
\]

The numbers \(\varepsilon_n\) and \( \eta_n \) will be chosen during the induction.  We shall
always require
\[
        0<\varepsilon_n<\frac{\mu_n}{100}.
\]

\[
        0<\eta_n<\min\left\{\bar\eta_n,\frac{a_{n+1}}{20}\right\}.
\]
This guarantees that the numerical hypothesis in \cite[Lemma~2]{MM05} can be
satisfied once \(a_n\) has been chosen sufficiently small compared with
\(\mu_n\).

Furthermore, after \(X_n\) and \(\Omega_n\) have been constructed, we choose the future
tolerances so that
\[
        \sum_{k\ge n}\eta_k
        <
        \frac12
        \min_{\Omega_n}
        \left|
        \frac{\partial X_n}{\partial x}
        \right|.
\]


\begin{Rem} \label{re:areapre}
We would like to point out that we also protect the area of the packet \(L_n\) for all later stages.  Since
\[
        \int_{L_n}dA_{X_{n+1}}\ge 2a_0
\]
and \(L_n\) is compact, there exist a compact neighborhood
\[
        U_n\Subset \Omega_{n+1}\setminus \overline{D_n}
\]
of \(L_n\) and a number \(\alpha_n>0\) such that, if \(Z\) is a conformal
immersion on \(U_n\) satisfying
\[
        \|Z-X_{n+1}\|_{C^1(U_n)}<\alpha_n,
\]
then
\[
        \int_{L_n}dA_Z\ge a_0.
\]
For this reason, the future tolerances \(\eta_k\), \(k\ge n+1\), are chosen so that
\[
        \sum_{k\ge n+1}\eta_k<\alpha_n. \]
\end{Rem}

\subsection{The inductive construction}

We start by choosing $P_1, D_1, X_1$, and $\Omega_1$.
Choose $P_1$ to be a polygonal disk in $\mathbb C$ whose boundary lies in the
annulus
\[
        \{r_1-a_1/2<|z|<r_1\} \quad \mbox{and} \quad 0 \in \operatorname{Int} P_1.
\]
Then, since $X_1(z)=(z,0)$, choosing $\varepsilon_1>0$ sufficiently small gives
\[
        X_1\bigl(
        \operatorname{Int}P_1\setminus
        \operatorname{Int}P_1^{\varepsilon_1}
        \bigr)
        \subset
        B_{r_1}\setminus B_{r_1-a_1}.
\]
Thus (I2) holds for $n=1$. Let $\Omega_1$ be a compact subdomain of $D_1 = \Int P_1^{\varepsilon_1}$ containing $0$.

\vskip1mm
We now explain the inductive construction.
Suppose that $X_n$, $P_n$, $D_n$, and $\Omega_n$ have been constructed. We must now construct $X_{n+1}$, $P_{n+1}$, $D_{n+1}$ (i.e., choose $\varepsilon_{n+1}$), and $\Omega_{n+1}$.

We first choose \(\eta_n>0\) small enough so that
\[
        0<\eta_n<\min\left\{\bar\eta_n,\frac{a_{n+1}}{20}\right\},
\]
and so that all previously imposed tail conditions are preserved.  In
particular, the remaining future tolerances can still be chosen to preserve
nondegeneracy of the conformal factor on the protected domains and to preserve
the area of all previously constructed packets.  This is possible because, at
the \(n\)-th stage, only finitely many such conditions have been imposed.

We apply Lemma~\ref{lem:area-enriched-step} with
\[
        P=P_n , \quad D=D_n , \quad X=X_n , \quad E=B_{r_n},
        \quad
        E'=B_{r_{n+1}},
        \quad  K_0=\Omega_n \, , 
\]
and with the constants
\[
        A_0=2a_0, \quad a = a_n, \quad b=b_n, \quad \eta_0 = \eta_n.
\]

The shell hypothesis required by \cite[Lemma~2]{MM05} is exactly (I2), and the
remaining numerical hypothesis is guaranteed by the parameter choice in the
previous subsection.  

Lemma~\ref{lem:area-enriched-step} gives a polygon $P_{n+1}$, a map $X_{n+1}$ on the interior of $P_{n+1}$, and a compact set 
$L_n \subset \Int P_{n+1} \setminus \overline{D_n}$ with the following properties:
\begin{enumerate}
\item $\overline{D_n} \subset \Int P_{n+1}$ and $\overline{\Int P_{n+1}} \subset \Int P_n$.
\item $X_{n+1} (P_{n+1}) \subset B_{r_{n+1}} \setminus B_{r_{n+1}-b_n}$ and thus $X_{n+1} (\Int P_{n+1}) \subset B_{r_{n+1}} $. 
\item   $X_{n+1} (\Int P_{n+1} \setminus D_n) \subset \RR^3 \setminus B_{r_{n}- 3 \, (a_n + b_n)} $. 
\item  $ \|X_{n+1}-X_n\|_{C^0(D_n)}<\eta_n$ and  $\|X_{n+1}-X_n\|_{C^1(\Omega_n)}<\eta_n$. 
\item  $ \int_{L_n}dA_{X_{n+1}}\ge 2a_0$. 
\end{enumerate}
It remains to choose $\varepsilon_{n+1}$ (and thus also $D_{n+1}$), to choose $\Omega_{n+1}$, and to verify that  (I1) and (I3)--(I5) hold at $n$ and that (I2) holds at $n+1$.

The first conclusion in (2), the fact that $b_n \leq \frac{a_{n+1}}{20}$, and the continuity of $X_{n+1}$ imply that there exists $\varepsilon_{n+1}' > 0$ so that 
$$
	X_{n+1} \left(\Int P_{n+1} \setminus \Int P_{n+1}^{\varepsilon_{n+1}'} \right) \subset B_{r_{n+1}} \setminus B_{r_{n+1} - a_{n+1}} \, .
$$
Using that $\overline{D_n} \cup L_n \subset \Int P_{n+1}$ is compact, we can 
choose $\varepsilon_{n+1} \in (0 , \varepsilon_{n+1}')$ small enough that 
$$
	L_n \cup \overline{D_n} \subset \Int P_{n+1}^{\varepsilon_{n+1}} \equiv D_{n+1}\, .
$$
Note that this is part of (I5). 
Then choose a compact $\Omega_{n+1}$ with 
$$
	\overline{D_n} \cup L_n \Subset \Omega_{n+1} \Subset   D_{n+1} \, .
$$
This gives (I1) at $n$ and (I2) at $n+1$; (I3) at $n$ follows from (3). (I4a) and (I4b) follow at $n$ by (4).  Finally, the choice of $\varepsilon_{n+1}$ and (5) give (I5) at $n$.

\subsection{The limiting immersion}

By (I4b) and the summability of $\eta_n$, the sequence $X_n$ converges in the
$C^1$ topology on compact subsets of
\[
        \Omega=\bigcup_{n=1}^{\infty}\Omega_n
\]
to a map
\[
        X:\Omega\longrightarrow\mathbb R^3.
\]
We now prove that the limit is an {\bf immersion}.  This is another place where the
summability of the later $C^1$-errors is used.

At the end of the $n$-th stage, the map
\[
        X_n:\operatorname{Int}P_n\longrightarrow \mathbb R^3
\]
is a conformal immersion, and
\[
        \Omega_n\Subset \operatorname{Int}P_n.
\]
Hence its conformal factor is bounded below on $\Omega_n$.  Writing
$z=x+iy$, set
\[
        \lambda_n(p)
        =
        \left|
        \frac{\partial X_n}{\partial x}(p)
        \right|
        =
        \left|
        \frac{\partial X_n}{\partial y}(p)
        \right|,
        \qquad
        p\in \Omega_n.
\]
Since \(X_n\) is an immersion and \(\Omega_n\) is compact, we have
\[
        m_n:=\min_{\Omega_n}\lambda_n>0.
\]
When the later approximation tolerances were chosen, we required
\[
        \sum_{k\ge n}\eta_k<\frac{m_n}{2}
\]
for every \(n\).
This is possible by choosing the numbers \(\eta_k\) inductively and decreasing
them when necessary.  At every finite stage only finitely many previously
introduced conditions have to be respected, and the remaining tail can always be
made as small as desired.

Let now \(p\in\Omega\).  Choose \(n\) such that \(p\in\Omega_n\).  Since the
domains are nested, \(p\in\Omega_k\) for every \(k\ge n\).  By (I4b),
\[
        \|X_{k+1}-X_k\|_{C^1(\Omega_n)}
        \le
        \|X_{k+1}-X_k\|_{C^1(\Omega_k)}
        <
        \eta_k
        \qquad\text{for every } k\ge n.
\]
Therefore
\[
        \left|
        \frac{\partial X}{\partial x}(p)
        -
        \frac{\partial X_n}{\partial x}(p)
        \right|
        \le
        \sum_{k\ge n}
        \left\|
        X_{k+1}-X_k
        \right\|_{C^1(\Omega_n)}
        <
        \sum_{k\ge n}\eta_k
        <
        \frac{m_n}{2}.
\]
It follows that
\[
        \left|
        \frac{\partial X}{\partial x}(p)
        \right|
        \ge
        \left|
        \frac{\partial X_n}{\partial x}(p)
        \right|
        -
        \left|
        \frac{\partial X}{\partial x}(p)
        -
        \frac{\partial X_n}{\partial x}(p)
        \right|
        >
        m_n-\frac{m_n}{2}
        =
        \frac{m_n}{2}
        >
        0.
\]
Thus \(dX_p\neq 0\).

Since each \(X_n\) is conformal and the convergence is \(C^1\) on compact
subsets, the conformality equations pass to the limit:
\[
        \left\langle \frac{\partial X}{\partial x},\frac{\partial X}{\partial y}\right\rangle=0,
        \qquad
        \left|\frac{\partial X}{\partial x}\right|^2=\left|\frac{\partial X}{\partial y}\right|^2.
\]
Since \(\frac{\partial X}{\partial x}(p)\neq0\), these equations imply that \(\frac{\partial X}{\partial y}(p)\neq0\) and that
\(\frac{\partial X}{\partial x}(p)\) and \(\frac{\partial X}{\partial y}(p)\) are linearly independent.  Hence \(dX_p\) has rank
two.  Since \(p\in\Omega\) was arbitrary, \(X\) is an immersion.

The coordinate functions of the \(X_n\)'s are harmonic.  Since \(X_n\to X\)
uniformly on compact subsets, the coordinate functions of \(X\) are harmonic.
Together with conformality and the fact that \(X\) is an immersion, this implies
that \(X\) is a conformal minimal immersion.

  The domain $\Omega$ is simply connected because it is an increasing
union of simply connected planar domains. Moreover, by construction, \(\Omega\) is a bounded simply connected domain in
\(\mathbb C\).  Hence, by the Riemann mapping theorem, \(\Omega\) is conformally
equivalent to the unit disk.

We prove that $X$ is {\bf proper}.  Let $R>0$ be fixed.  Choose $n$ so large that $\rho_n>R$ and so that
$\rho_m\ge \rho_n$ for every $m\ge n$ (see Remark \ref{re:rho}.)
Let $p\in\Omega\setminus D_n$.  Then there exists $m\ge n$ such that
\[
        p\in D_{m+1}\setminus D_m.
\]
By (I3),
\[
        X_{m+1}(p)\in \mathbb R^3\setminus B_{r_m-\tau_m}.
\]
{The remaining tail of the approximations is controlled in the \(C^0\) topology
by (I4a), and hence has size at most
\[
        \sum_{k\ge m+1}\eta_k.
\]
Therefore
\[
        X(p)\in
        \mathbb R^3\setminus
        B_{r_m-\tau_m-\sum_{k\ge m+1}\eta_k}.
\]
Since
\[
        r_m-\tau_m-\sum_{k\ge m+1}\eta_k
        \ge
        r_m-\tau_m-\sum_{k\ge m}\eta_k
        =
        \rho_m
        \ge
        \rho_n
        >
        R,
\]
we get
\[
        X(p)\in \mathbb R^3\setminus B_R.
\]}
Thus
\[
        X(\Omega\setminus D_n)\subset \mathbb R^3\setminus B_R.
\]
It follows that
\[
        X^{-1}(\overline B_R)\subset D_n.
\]
Since $D_n\Subset\Omega$, this proves that $X$ is proper.

Since $X$ is a proper immersion into the complete ambient space $\mathbb R^3$,
the induced metric on $\Omega$ is complete.  Indeed, if $\gamma$ is a divergent
curve in $\Omega$, then properness implies that $|X(\gamma(t))|\to\infty$.
Hence the Euclidean length of $X\circ\gamma$ is infinite, and therefore the
$X$-length of $\gamma$ is infinite.

\subsection{Exponential area growth}

We now prove the desired {area lower bound}.

\begin{Thm}
\label{thm:proper-R3-exponential-area-growth}
There exists a complete proper conformal minimal immersion
\[
        X:\Omega\longrightarrow\mathbb R^3
\]
from a simply connected Riemann surface $\Omega$ such that
\[
        \int_{X^{-1}(B_R)}dA_X
        \ge
        Ce^{cR}
\]
for all sufficiently large $R$ and for some constants $C,c>0$.
\end{Thm}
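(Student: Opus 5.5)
The immersion is the limit $X:\Omega\to\mathbb R^3$ just constructed. It is already known to be a conformal minimal immersion, proper, and complete, and $\Omega$ is simply connected. So the plan is to prove only the area bound. The idea is to count the disjoint packets $L_n$ that must lie in $X^{-1}(B_R)$ and to use $r_n=\log(n+n_0)$.

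\textbf{Step 1: each packet keeps area $a_0$ in the limit.} By (I5), $L_n\Subset\Omega_{n+1}\setminus\overline D_n$ and $\int_{L_n}dA_{X_{n+1}}\ge 2a_0$. By (I1) and (I4b), for $k\ge n+1$ we have $U_n\subset\Omega_{n+1}\subset\Omega_k$ and $\|X_{k+1}-X_k\|_{C^1(U_n)}<\eta_k$. Hence $\|X-X_{n+1}\|_{C^1(U_n)}\le\sum_{k\ge n+1}\eta_k<\alpha_n$, and Remark~\ref{re:areapre} gives $\int_{L_n}dA_X\ge a_0$.

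\textbf{Step 2: the packets are disjoint.} We have $L_n\subset\Omega_{n+1}\setminus\overline D_n$, and by (I1) $\Omega_{n+1}\subset D_{n+1}$. So $L_n\subset D_{n+1}\setminus D_n$, and these sets are pairwise disjoint as $n$ varies.

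\textbf{Step 3: locate the packets in space.} By the second conclusion in (2), $X_{n+1}(\operatorname{Int}P_{n+1})\subset B_{r_{n+1}}$. For $k\ge n+1$, (I4a) gives $\|X_{k+1}-X_k\|_{C^0(D_k)}<\eta_k$, and $L_n\subset D_{n+1}\subset D_k$. Therefore
\[
|X(p)|< r_{n+1}+\sum_{k\ge n+1}\eta_k\le r_{n+1}+\tfrac{\mu_{n+1}}{100}<r_{n+2}
\qquad\text{for } p\in L_n .
\]
So $L_n\subset X^{-1}(B_R)$ whenever $r_{n+2}\le R$.

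\textbf{Step 4: count.} Given $R$ large, let $N$ be the largest integer with $\log(N+2+n_0)\le R$; then $N\ge e^{R}-n_0-3$. Steps 1--3 give
\[
\int_{X^{-1}(B_R)}dA_X\ \ge\ \sum_{n=1}^{N}\int_{L_n}dA_X\ \ge\ a_0N\ \ge\ \tfrac{a_0}{2}\,e^{R}
\]
for $R$ large. This is the claim with $C=a_0/2$ and $c=1$.

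The main obstacle is Step 1. It needs the tail tolerances to be chosen after each packet $L_n$ exists, so that $C^1$ closeness on $U_n$ holds for every later stage. This is exactly why Remark~\ref{re:areapre} was imposed. We also need $U_n\subset\Omega_{n+1}$, so that (I4b) controls it at all later stages; this follows from $U_n\Subset\Omega_{n+1}\setminus\overline{D_n}$. The outer bound in Step 3 is routine once we note that the sum of the $\eta_k$ is dominated by $\mu_{n+1}$.
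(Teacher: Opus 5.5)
Your proposal is correct and follows the paper's proof essentially step for step. You use the same four ingredients: the tail condition of Remark~\ref{re:areapre} to keep $\int_{L_n}dA_X\ge a_0$, disjointness from $L_n\subset D_{n+1}\setminus D_n$, the inclusion $X(L_n)\subset B_{r_{n+2}}$ from the summed tolerances, and the count $N(R)\ge e^{R}-n_0-3$ coming from $r_n=\log(n+n_0)$. The only cosmetic differences are that you locate the packets with the $C^0$ bound (I4a) rather than the paper's $C^1$ estimate, and you state the $C^1$ closeness on the neighborhood $U_n$, which is what the remark actually requires, rather than on $L_n$.
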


\begin{proof}
The immersion $X:\Omega\to\mathbb R^3$ has already been constructed above and
shown to be proper and complete.  It remains only to estimate its area growth.

The compact sets $L_n$ are pairwise disjoint because
\[
        L_n\Subset D_{n+1}\setminus \overline D_n
\]
and the domains $D_n$ are nested.

Each $L_n$ is contained in $\Omega_{n+1}$, and all later approximations are
chosen $C^1$-small on $\Omega_{n+1}$ by (I1) and (I4b). In particular, the triangle inequality gives that
\[
	\| X_{n+1} - X \|_{C^1 (L_n)} \leq \sum_{k> n} \eta_k \leq \frac{\mu_n}{100} \, .
\]
Note that this norm goes to zero as $n \to \infty$.  We use this both to control  the areas of $X(L_n)$ and the  locations of the images $X(L_n)$. 
By the area-preservation condition chosen during the induction (Remark \ref{re:areapre}),
\[
        \int_{L_n}dA_X\ge a_0, \quad \mbox{for every \(n\).}
\]

Next, $X_{n+1}(L_n)\subset B_{r_{n+1}}$ implies that 
\[
        X(L_n)\subset
        B_{r_{n+1}+\frac{\mu_n}{100}} \subset B_{r_{n+2}} \, .
\]

Let
\[
        N(R)
        =
        \max\left\{
        n:
        r_{n+2}\le R
        \right\}.
\]
Then
\[
        L_1,\ldots,L_{N(R)}
        \subset
        X^{-1}(B_R).
\]
Therefore
\[
        A_X(R)
        =
        \int_{X^{-1}(B_R)}dA_X
        \ge
        \sum_{n=1}^{N(R)}
        \int_{L_n}dA_X
        \ge
        a_0 \, N(R).
\]
Since
\(
        r_n=\log(n+n_0)
\), 
there exist constants $C_1,c_1>0$ such that
\[
        N(R)\ge C_1e^{c_1R}
\]
for all sufficiently large $R$.  Consequently,
\[
        A_X(R)\ge Ce^{cR}
\]
for some constants $C,c>0$ and all sufficiently large $R$.
\end{proof}

\begin{Rem}[Gaussian lower growth]
The same construction gives Gaussian lower growth if the radii are chosen more
densely.  For example, take
\[
        r_n=\sqrt{\frac1d\log(n+n_0)}
\]
for some fixed $d>0$.  Then
\[
        \#\{n:r_n\le R\}\simeq \exp(dR^2).
\]
Since $r_n\to\infty$ and the gaps $r_{n+1}-r_n$ are positive, the parameters
$a_n,b_n,\varepsilon_n,\eta_n$ can be chosen much smaller than these
gaps and with the same compatibility conditions as above.  The same properness
and packet-counting argument therefore gives
\[
        \int_{X^{-1}(B_R)}dA_X
        \ge
        C\exp(cR^2)
\]
for all sufficiently large $R$.
\end{Rem}
\section{Final comments and further questions}
\label{sec:final-comments}

The examples in this paper show that properness alone does not impose any
reasonable polynomial upper bound for the extrinsic area growth of a minimal
surface.  This should be contrasted with the volume-doubling restrictions for
minimal submanifolds confined in slabs, or in regions that are slab-like at all
large scales, obtained in \cite{CM1}.  Thus rapid area growth is compatible with
properness, but not with the type of confinement considered in \cite{CM1}.

The two constructions are quite different.  In Section~1 the examples are
holomorphic graphs in $\mathbb C^2=\mathbb R^4$.  Their rapid area growth comes
from finding many regions where the holomorphic function remains bounded while
its derivative is large.  In Section~2 the example lies in $\mathbb R^3$, but the
construction is necessarily less explicit.  The Mart\'{\i}n--Morales deformation \cite{MM05} is used to push the boundary
outward and obtain properness, while
the Riemann--Hilbert perturbation from \cite{ADDFL} is used separately to insert
area.  This separation is important: the area packets are not taken from the
L\'opez--Ros transformation itself, but are inserted afterwards by a uniformly small
perturbation.

We end with several natural questions that are not addressed by the present
construction.

\begin{Que}
Can the proper minimal immersion in $\mathbb R^3$ constructed in
Section~2 be replaced by a proper embedded minimal surface with comparable
area growth?
\end{Que}

The construction in Section~2 gives a proper immersion, and the area lower bound
is for parametrized area, counted with multiplicity.  Thus a stronger embedded
version would require additional separation estimates, not just the
properness and area-inflation mechanisms used here.

\begin{Que}
Can one prescribe the extrinsic area growth more precisely for proper minimal
immersions in $\mathbb R^3$?
\end{Que}

The method gives exponential growth, and by choosing the radii more densely it
also gives Gaussian lower growth.  More generally, one can ask how flexibly the
sequence of radii and area packets can be chosen, and what limitations remain
when one requires properness throughout the induction.

\begin{Que}
Can one obtain rapid area growth together with stronger geometric control, such
as embeddedness, curvature estimates, or restrictions on the number of sheets in
extrinsic balls?
\end{Que}

The present construction is designed to force area while preserving properness.
It does not attempt to control self-intersections or multiplicity in extrinsic
balls.  Understanding whether rapid growth can coexist with stronger global
geometric control seems to be a separate problem.

\begin{Que}
What are the sharp geometric hypotheses that force polynomial area growth for
proper minimal submanifolds?
\end{Que}

The results of \cite{CM1} show that slab-type confinement leads to strong
volume restrictions.  The examples here show that, without such confinement,
proper minimal submanifolds may have very large area growth.  It would be
interesting to understand more precisely which large-scale geometric conditions
separate these two behaviors.

\end{document}